\makeatletter \@addtoreset{equation}{section} \makeatother
\renewcommand\thetable{\thesection.\@arabic\c@table}
\theoremstyle{plain}
\newtheorem{maintheorem}{Theorem}
\newtheorem{maincorollary}{Corollary}
\newtheorem{theorem}{Theorem }[section]
\newtheorem{proposition}[theorem]{Proposition}
\newtheorem{lemma}[theorem]{Lemma}
\theoremstyle{definition} \theoremstyle{remark}
\newtheorem{remark}[theorem]{Remark}
\renewcommand{\epsilon}{\varepsilon}
\title[Explosion of differentiability for equivalencies between Anosov flows on $3$-manifolds]{Explosion of differentiability for equivalencies between Anosov flows on $3$-manifolds}
\author[M. Bessa]{M\'{a}rio Bessa}
\address{CMA-UBI, Departamento de Matem\'atica da Universidade da Beira Interior, Rua Marqu\^es d'\'Avila e Bolama,
  6201-001 Covilh\~a,
Portugal.}
\email{bessa@ubi.pt}
\author[S. Dias]{S\'ergio Dias}
\address{Departamento de Matem\'atica, Universidade do Porto, 
Rua do Campo Alegre, 687, 
4169-007 Porto, Portugal}
\email{sergiodias@fc.up.pt}
\author[A. A. Pinto]{Alberto A. Pinto}
\address{Departamento de Matem\'atica, Universidade do Porto, 
Rua do Campo Alegre, 687, 
4169-007 Porto, Portugal}
\email{aapinto@fc.up.pt}
\begin{document}

\maketitle

\begin{abstract}
For Anosov flows obtained by suspensions of Anosov diffeomorphisms on surfaces, we show the following type of
rigidity result: if a topological conjugacy between them is differentiable at a point, then the conjugacy has a smooth extension to the suspended $3$-manifold. These result generalize
the similar ones of Sullivan and Ferreira-Pinto for 1-dimensional expanding
dynamics and also a result of Ferreira-Pinto for 2-dimensional hyperbolic dynamics.
\end{abstract}

\noindent
{\footnotesize\textbf{Keywords:} Anosov flow,
topological and differentiable equivalence, conjugacy.

\noindent
\textbf{MSC2010:}
Primary:  37D20, 37C15
 Secondary: 37D10
}

\section{Introduction, preliminary definitions and statement of the results}

\subsection{Introduction}

There is an established theory in hyperbolic dynamics that studies properties of the dynamics and of the topological conjugacies
  that lead to   additional regularity for the conjugacies.
In the early seventies Mostow (see \cite{Mostow}) proved that if  $\mathbb{H}/\Gamma_X$ and  $\mathbb{H}/\Gamma_Y$ are two closed hyperbolic Riemann surfaces covered by finitely generated Fuchsian groups $\Gamma_X$ and $\Gamma_Y$ of finite analytic type, and
$\phi:\overline{\mathbb{H}} \to \overline{\mathbb{H}}$ induces the isomorphism
$i(\gamma)= \phi \circ  \gamma \circ \phi^{-1}$, then $\phi$ is a M\"obius transformation if, and only if, $\phi$  is absolutely continuous.
 Later, in \cite{5SSSShhhub}, Shub and Sullivan  proved  that for any two analytic orientation
preserving circle expanding endomorphisms $f$ and $g$ of the same degree,  the conjugacy is analytic if, and only if, the conjugacy is absolutely continuous.
 Furthermore, they proved    that if  $f$ and $g$ have the same set of eigenvalues, then the conjugacy is analytic.
After these results, de la Llave \cite{12}  and Marco and Moriyon
\cite{14,15}
 proved that if Anosov diffeomorphisms have the same set of eigenvalues, then
the conjugacy is smooth. For maps with critical points,
Lyubich (see \cite{LLL1}) proved that $C^2$ unimodal maps with Fibonacci
 combinatorics and   the same eigenvalues are $C^1$ conjugate.
Later on, de Melo and  Martens \cite{MMMMM1} proved that if topological conjugate unimodal maps, whose attractors are cycles of intervals, have the same set of eigenvalues, then the conjugacy is smooth.  More recently, Dobbs (see \cite{Dobbs}) proved that if a  multimodal map $f$ has an   absolutely continuous invariant measure, with a positive Lyapunov exponent, and   $f$ is absolutely continuous conjugate to another multimodal map, then the conjugacy is  $C^r$ in the domain of some induced Markov map of $f$.

In the present paper we  study  the explosion of smoothness for topological conjugacies,
i.e.   the conditions under which the smoothness of the conjugacy in a single point extends to the whole manifold.
 Tukia, in \cite{Tukia}, extended the aforementioned result of Mostow proving that if  $\mathbb{H}/\Gamma_X$ and  $\mathbb{H}/\Gamma_Y$ are two closed hyperbolic Riemann surfaces covered by finitely generated Fuchsian groups $\Gamma_X$ and $\Gamma_Y$ of finite analytic type, and
$\phi:\overline{\mathbb{H}} \to \overline{\mathbb{H}}$ induces the isomorphism
$i(\gamma)= \phi \circ  \gamma \circ \phi^{-1}$, then $\phi$ is a M\"obius transformation if, and only if,  $\phi$ is differentiable at one radial
limit point with non-zero derivative.
Sullivan  \cite{sullivan} proved that if a topological conjugacy between analytic orientation preserving circle expanding endomorphisms of the same degree is differentiable at a  point with non-zero derivative, then
the conjugacy is analytic.
Extensions  of these results  for Markov maps and   hyperbolic basic sets on surfaces
 were developed by Faria  \cite{Edson111},  Jiang \cite{JiangRG,111jia}  and Pinto, Rand and Ferreira \cite{FP,RP}, among others.
 For maps with critical points,
Jiang \cite{4Jiang,5Jiang,6Jiang,11Jiang}  proved that quasi-hyperbolic 1-dimensional maps   are smooth conjugated in an open set with full Lebesgue measure
if the conjugacy is differentiable at a  point with  uniform bound. Very recently (see \cite{APP}), Alves, Pinheiro and Pinto proved that if a topological conjugacy between multimodal maps is $C^1$ at a point in the nearby expanding set
of f, then the conjugacy is a smooth diffeomorphism in the basin of attraction of a renormalization interval.

 In the present work we begin the generalization of these type of results for continuous-time dynamical systems by proving the corresponding result for Anosov flows obtained by suspensions of Anosov diffeomorphisms on
surfaces. More precisely, we prove that if a topological conjugacy between two Anosov flows, obtained from the suspension of Anosov maps in surfaces, is differentiable at a point then the conjugacy has a smooth extension to the suspended $3$-manifold.

\subsection{Statement of the results}

Let $M$ be a $d$-dimen\-sio\-nal closed and connected $C^\infty$ Riemannian manifold. Along this paper $d=2$ when we consider diffeomorphisms and $d=3$ when considering vector fields/flows. Any $C^1$ vector field $X\colon M\rightarrow TM$ can be integrated into a flow $X_t\colon M\rightarrow M$ which is a time-parameter group of diffeomorphisms. A flow is said to be \emph{Anosov} if the tangent bundle $TM$
splits into three continuous $DX_t$-invariant nontrivial subbundles
$E^0\oplus E^u\oplus E^s$ where $E^0$ is the flow direction, the
sub-bundle $E^s$ is uniformly contracted by $DX_{t}$ and the
sub-bundle $E^u$ is uniformly contracted by $DX_{-t}$
for all $t>0$. Of course that, for an Anosov flow, we have $\emph{Sing}(X)=\emptyset$ which follows from the fact that the dimensions of the subbundles are constant on the whole manifold. The first example was obtained studying the geodesic flow of surfaces with negative curvature (cf. \cite{A}). Anosov systems for discrete dynamical systems are defined in an analogous way and the prototypical example is given by hyperbolic linear automorphisms of tori.

Our main result is the following (see \S\ref{full} for detailed definitions):

\begin{maintheorem}\label{teo0}
Let $f: M \longrightarrow M$ and $g: N \longrightarrow N$ be two surfaces Anosov diffeomorphisms. Assume that there exists a topologically conjugacy $h: M \longrightarrow N$ between them and, moreover, $h$ is differentiable in a single point. Let $c_f$ and $c_g$ be two ceiling functions over $M$ and $N$, respectively. If $c_f$ and $c_g$ are differentiable, then the function $\hat{h}: M_{c_f} \longrightarrow N_{c_g}$ defined in (\ref{h}) is differentiable. 
\end{maintheorem}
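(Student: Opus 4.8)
The plan is to split the argument into a \emph{discrete} part, in which the single-point differentiability of $h$ is amplified into global smoothness of $h$ on the surface, and a \emph{continuous-time} part, in which this regularity is transported to the suspension through the explicit formula~(\ref{h}), using that the ceiling functions are differentiable.

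First I would treat the base conjugacy $h$. Since $h$ conjugates the surface Anosov diffeomorphisms $f$ and $g$, it carries the stable and unstable foliations of $f$ onto those of $g$ and, on each leaf, restricts to a topological conjugacy between the induced one-dimensional contracting/expanding dynamics. Using the relation $h=g^{-n}\circ h\circ f^{n}$ one transports the hypothesis---differentiability at a single point $p$---along the orbit of $p$; the hyperbolicity (expansion on unstable leaves, contraction on stable leaves, local product structure, and topological transitivity) then lets one spread differentiability to a dense set of points and, through bounded-distortion estimates that make the derivatives uniformly comparable, to the whole surface. This is precisely the two-dimensional \emph{explosion of differentiability} established in \cite{FP,RP}, which I would invoke to conclude that $h$ is in fact a $C^{1}$ (indeed $C^{1+}$) diffeomorphism of $M$ onto $N$.

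Granting that $h$ and both ceiling functions are differentiable, I would then show that $\hat h\colon M_{c_f}\to N_{c_g}$ is differentiable by reading it in flow-box coordinates. Away from the cross-section $M\times\{0\}$ the map~(\ref{h}) is a composition of the smooth ingredients $h$, $c_f$, $c_g$ and the suspension flows, so differentiability there is immediate. The delicate point---and the main obstacle for the new, flow-theoretic part---is differentiability \emph{across} the gluing cross-section $M\times\{0\}$, where the smooth structure of the suspended manifold is defined precisely so that the flow crosses smoothly, a property that already uses the differentiability of $c_f$ and of $c_g$. One must match the one-sided derivatives of the time reparametrization encoded in~(\ref{h}): a naive fibre-wise linear rescaling by $c_g(h(x))/c_f(x)$ would produce a corner in the flow direction at the cross-section, so it is essential that~(\ref{h}) distributes the reparametrization smoothly and that $c_f,c_g$ be differentiable. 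Concretely, I would compute the Jacobian of $\hat h$ in the two flow boxes adjacent to a cross-section point, use the conjugacy relation $h\circ f=g\circ h$ together with the cocycle identities for the Birkhoff sums of the ceilings, and check that the left and right derivatives agree, so that a well-defined total derivative exists at every point.

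I expect the genuinely hard input to be the discrete explosion step, since differentiability at one point is an extremely weak hypothesis that becomes global only through the self-renormalizing geometry of the hyperbolic set, via control of the stable and unstable holonomies and of the associated scaling data; once this is granted from \cite{FP,RP}, the suspension step is a careful but essentially routine transport of regularity, whose only real subtlety is the compatibility across the cross-section, which is exactly what the differentiability of the ceiling functions is there to guarantee.
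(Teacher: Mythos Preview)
Your two–step plan (invoke \cite{FP} to upgrade $h$ to a global $C^{1}$ diffeomorphism, then push the regularity up to the suspension via the explicit formula) is exactly the architecture the paper uses, and your identification of the cross-section $M\times\{0\}$ as the only delicate locus is also correct. The paper proves a preliminary proposition showing $\hat h$ is differentiable on each flow box for the same reasons you give, and then turns to the gluing.

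There is, however, a real gap in how you propose to handle the gluing. The formula~(\ref{h}) \emph{is} precisely the ``naive fibre-wise linear rescaling'' $\hat h(x,s)=\bigl(h(x),\,s\,c_g(h(x))/c_f(x)\bigr)$ that you yourself warn would create a corner. If you carry out the check you describe, the flow-direction derivative of $\hat h$ at the cross-section equals $c_g(h(x))/c_f(x)$ from below and $c_g\bigl(g(h(x))\bigr)/c_f(f(x))$ from above; these agree only when $c_f(f(x))/c_f(x)=c_g\bigl(g(h(x))\bigr)/c_g(h(x))$, a coboundary-type identity that is \emph{not} assumed and does \emph{not} follow from the conjugacy relation or from the Birkhoff-sum cocycle. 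So the verification you sketch would in fact fail, and the map literally defined by~(\ref{h}) is only piecewise differentiable.

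The paper does not try to match one-sided derivatives. Instead it \emph{modifies} $\hat h$: using a bump-function lemma it replaces the linear law $s\mapsto s\,c_g(h(x))/c_f(x)$ by a smooth reparametrization $\phi_x$ with $\phi_x(s)=s$ for $s\in[0,\varepsilon]$ and $\phi_x(s)=s+c_g(h(x))-c_f(x)$ for $s\in[c_f(x)-\varepsilon,c_f(x)]$, while still sending $[0,c_f(x)]$ onto $[0,c_g(h(x))]$. With this choice $\hat h$ is the identity in the flow direction on an $\varepsilon$-collar of the cross-section on both sides, so the two charts glue trivially and differentiability on all of $M_{c_f}$ follows. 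In short: your discrete step is fine, but the continuous-time step needs the bump-function reparametrization, not a cocycle check.
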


In ~\cite{P} Plante showed that codimension-1 Anosov flows on compact and connected manifolds $M$ are known to admit global cross sections, provided the fundamental group of $M$ is solvable. Clearly, Anosov flows on 3-dimensional manifolds are codimensional.  In ~\cite{P} it is also obtained that when $M$ is a bundle over $\mathbb{S}^1$ fibered by a $2$-torus $\mathbb{T}^2$, then any Anosov flow on $M$ is topologically equivalent to the suspension of a hyperbolic automorphism on $\mathbb{T}^2$. 

As a direct consequence of ~\cite{FP} and Theorem~\ref{teo0} we obtain:

\begin{maincorollary}\label{teo1}
Let $X_t\colon M\rightarrow M$ and $Y_t\colon M\rightarrow M$ be Anosov flows on a closed $3$-manifold $M$ which is a bundle over $\mathbb{S}^1$ with fiber bundle $\mathbb{T}^2$, denote $M=\mathbb{S}^1\times \mathbb{T}^2$. If $h\colon M\rightarrow M$ is an equivalence between the two flows which is differentiable in $x\in \mathbb{T}^2$, then $h$ is differentiable in the whole $M$.
\end{maincorollary}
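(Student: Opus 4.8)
The plan is to reduce the statement to the discrete surface setting covered by Theorem~\ref{teo0} by passing to a global cross section. First I would invoke Plante's theorem \cite{P}: since $M=\mathbb{S}^1\times\mathbb{T}^2$ is a bundle over $\mathbb{S}^1$ fibered by $\mathbb{T}^2$, both Anosov flows $X_t$ and $Y_t$ admit the global cross section $\mathbb{T}^2$, and the associated first-return (Poincar\'e) maps $f,g\colon\mathbb{T}^2\to\mathbb{T}^2$ are Anosov diffeomorphisms, each topologically conjugate to a hyperbolic automorphism of $\mathbb{T}^2$. Because the flows are smooth and $\mathbb{T}^2$ is a smooth transversal, the return times $c_f,c_g\colon\mathbb{T}^2\to\mathbb{R}^+$ are differentiable by the implicit function theorem; this identifies $(M,X_t)$ and $(M,Y_t)$ smoothly with the suspension flows on $\mathbb{T}^2_{c_f}$ and $\mathbb{T}^2_{c_g}$, so the ceiling functions required by Theorem~\ref{teo0} are at hand and differentiable.

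Next I would push the flow equivalence down to the cross section. Since $h$ sends $X_t$-orbits to $Y_t$-orbits, projecting the image of $\mathbb{T}^2$ back to $\mathbb{T}^2$ along the $Y_t$-flow produces a homeomorphism $\bar h\colon\mathbb{T}^2\to\mathbb{T}^2$ that conjugates $f$ and $g$. The smoothness of the orbit projection transfers the differentiability of $h$ at $x\in\mathbb{T}^2$ to differentiability of $\bar h$ at $x$. At this stage I would apply the Ferreira--Pinto explosion result \cite{FP} for surface Anosov diffeomorphisms: a topological conjugacy between $f$ and $g$ that is differentiable at one point is in fact a smooth conjugacy. In particular the conjugacy $\bar h$ satisfies the hypotheses of Theorem~\ref{teo0}.

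With $f,g$ surface Anosov diffeomorphisms, $\bar h$ a conjugacy differentiable at a point, and $c_f,c_g$ differentiable ceiling functions, Theorem~\ref{teo0} applies and yields that the induced map $\hat h\colon\mathbb{T}^2_{c_f}\to\mathbb{T}^2_{c_g}$ is differentiable. Transporting this conclusion back through the smooth identifications of $M$ with the two suspension manifolds shows that $h$ is differentiable on all of $M$.

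I expect the main obstacle to be the passage between the flow equivalence $h$ and the canonical suspension map $\hat h$ produced by Theorem~\ref{teo0}. A topological equivalence of flows preserves orbits but need not preserve the time parameter, so one must control the continuous time reparametrization relating $h$ to $\hat h$ and verify that differentiability of $h$ at $x$ really descends to $\bar h$ (and, conversely, that the differentiable $\hat h$ recovers the differentiability of $h$) uniformly along orbits. Checking that the return times and the orbit projections are genuinely smooth, so that no regularity is lost in the reduction to the discrete setting, is the technical heart of the argument.
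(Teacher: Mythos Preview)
Your plan matches the paper's exactly: the paper gives no separate proof of the corollary beyond the sentence ``As a direct consequence of~\cite{FP} and Theorem~\ref{teo0} we obtain,'' i.e.\ it reduces via Plante~\cite{P} to a global cross section, invokes~\cite{FP} on the induced surface conjugacy, and then applies Theorem~\ref{teo0}. The obstacle you flag---that a general equivalence $h$ differs from the canonical $\hat h$ by an orbitwise time reparametrization whose regularity must be controlled---is genuine, and the paper does not address it either; your caution there is well placed.
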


\section{Proof of Theorem~\ref{teo0}}\label{full}

\subsection{Suspension flows}

Let $f:M \longrightarrow M$ be a diffeomorphism and $c: M \longrightarrow \mathbb{R}^+$ a continuous function such that $c(x)\geq a > 0$ for all $x \in M$. We consider also the subspace of $M \times \mathbb{R}^+$ defined by:
$$\tilde{M}=\{(x,t) \in M \times \mathbb{R}^+: x \in M \textrm{ e } 0 \leq t \leq c(x)\}.$$
Let $M_c$ stands for the quotient space $M_c=\tilde{M}/\sim$, where $\sim$ is an equivalence relation in $\tilde{M}$ defined by $(x,c(x))\sim(f(x), 0)$. The \emph{suspension of $f$} with ceiling (or roof) function $c$ is the flow 
\begin{align}
\varphi_t:M_c &\longrightarrow M_c \nonumber \\
(x,s) &\longmapsto \big(f^n(x),s'\big) \nonumber
\end{align}
where $n$ is univocally determined by 
\begin{equation}
\sum_{i=0}^{n-1}c(f^i(x))\leq t+s < \sum_{i=0}^nc(f^i(x)) \label{formula}
\end{equation}
when $t+s \geq c(x)$; in this case we define $s'= s+t- \sum_{i=0}^{n-1} c(f^i(x))$. If $t+s < c(x)$, we take $n=0$ and $s'=s+t$. In brief words, we travel with velocity equal to one and along $\{x\} \times [0,c(x)]$, then we jump to $(f(x),0)$ and travel through $\{f(x)\} \times [0,c(f(x))]$ and so on until we spend the time $t$. We observe that the flow is well-defined because (\ref{formula}) implies $$0 \leq \underbrace{s+t- \sum_{i=0}^{n-1} c(f^i(x))}_{s'} \leq c(f^n(x)).$$

\begin{remark} Let us show that $\varphi$ defined above is a flow; $\varphi_0(x,s)=(x,s)$, since $n=0$. For the group property we have:
\begin{align*}
\varphi_t\big(\varphi_r(x,s)\big) = \; & \varphi_t\left(f^n(x), r+s-\sum_{i=0}^{n-1}c\big(f^i(x)\big)\right) =  \left(f^m\big(f^{n}(x)\big), t+r+s-\sum_{i=0}^{n-1}c\big(f^i(x)\big)-\sum_{i=0}^{m-1}c\big(f^i(f^n(x))\big)\right) \\
 = \; & \left( f^{n+m}(x), r+s+t- \sum_{i=0}^{n+m-1}c\big(f^i(x)\big)\right) 
= \varphi_{t+r}(x,s), \\
\end{align*}
where $n$ and $m$ are such that
\begin{center}
$\sum \limits_{i=0}^{n-1}c\big(f^i(x)\big)) \leq r+s < \sum \limits_{i=0}^{n}c\big(f^i(x)\big)$ \\ 

\vspace*{0.2cm}

and \quad $ \sum \limits_{i=0}^{m-1}c\big(f^i(f^n(x))\big) \leq r+s + t- \sum \limits_{i=0}^{n-1}c\big(f^i(x)\big) < \sum \limits_{i=0}^{m}c\big(f^i(f^n(x))\big)$.
\end{center}
The last inequality follows from
\begin{center}
$$\sum_{i=0}^{m-1}c\big(f^i(f^n(x))\big) \leq r+s + t- \sum_{i=0}^{n-1}c\big(f^i(x)\big) < \sum_{i=0}^{m}c\big(f^i(f^n(x))\big)$$ \\

\vspace*{-0.6cm}

$$\Downarrow$$ \\

\vspace*{-0.6cm}

$$\sum_{i=0}^{n+m-1} c\big(f^i(x)\big) \leq r+s+t < \sum_{i=0}^{n+m} c\big(f^i(x)\big).$$
\end{center}

\end{remark}

\vspace*{0.4cm}

\subsection{Topological equivalences and conjugacies}\label{TE} Two flows $\varphi: \mathbb{R} \times M \longrightarrow M$ and $\psi: \mathbb{R} \times N \longrightarrow N$ are said to be \emph{topologically equivalent} is there exists a homeomorphism $h: M \longrightarrow N$ such that $h$ sends orbits of $\varphi$ into orbits of $\psi$, and preserves the orientation. Two flows $\varphi$ and $\psi$ are said to be \emph{topologically conjugated} if there exists a homeomorphism $h: M \longrightarrow N$ sending orbits of $\varphi$ into orbits of $\psi$, preserving the orientation and also the time parametrization. Clearly, if $\varphi$ and $\psi$ are conjugated, then they are also equivalent, just take $\tau_x(t)=t$.

Let be given two diffeomorphisms $f: M \longrightarrow M$ and $g:N \longrightarrow N$. We consider the suspensions of $f$ and $g$ associated to ceiling functions $c_f: M \longrightarrow \mathbb{R}^+$ and $c_g: N \longrightarrow \mathbb{R}^+$ respectively. Let $\varphi_t$ and $\psi_t$ be the respective suspension flows. Assume that $f$ and $g$ are topologically conjugated, i.e., there exists a homeomorphism $h:M \longrightarrow N$ such that $g \circ h(x)=h \circ f(x)$ for all $ x \in M$. 

A natural question is to know if $\varphi_t$ and $\psi_t$ are still topologically conjugated. The answer, in general is negative. However, we will see that the flows are topologically equivalent. In order to prove it we must define a homeomorphism $\hat{h}: M_{c_f} \longrightarrow N_{c_g}$ which conjugates both flows and preserve the fixed orientation. We define $\hat{h}: M_{c_f} \longrightarrow N_{c_g}$ in the following way:
\begin{equation}\label{h}
\hat{h}(x,s)=\underbrace{\psi_{s\frac{c_g(h(x))}{c_f(x)}}\underbrace{h \circ \underbrace{\varphi_{-s}(x,s)}_{(x,0)}}_{(h(x),0)}}_{\left(h(x),s\frac{c_g\left(h(x)\right)}{c_f(x)}\right)}
\end{equation}
where we consider that $h: M \longrightarrow N$ extends to $h:M \times \{0\} \longrightarrow N \times \{0\}$ and we abuse and keep the same notation. In rough words, we travel by $\{x\} \times [0,c_f(x)]$ until $(x,0)$, apply $h$ and travel along the segment $\{h(x)\}\times [0, c_g(h(x))]$ the correspondent time. Given $s,t \in \mathbb{R}_0^+$, we consider the following function: 
\begin{align*}
n_{s,t}: M& \longrightarrow \mathbb{N}_0 \\
x& \longmapsto n_{s,t}(x)
\end{align*}
where $n_{s,t}(x)$ is the only integer such that
\begin{equation}
\sum_{i=0}^{n_{s,t}(x)-1}c_f\big(f^i(x)\big)\leq t+s < \sum_{i=0}^{n_{s,t}(x)}c_f\big(f^i(x)\big) \label{escolha de n}
\end{equation}
or $n_{s,t}(x)=0$, when $s+t<c_f(x)$. The map $t \mapsto n_{s,t}(x)$ is piecewise constant and increasing, for $s$ and $x$ fixed.  We would like to find $\tau_{(x,s)}: \mathbb{R} \longrightarrow \mathbb{R}$ strictly increasing such that, for all $ t \in \mathbb{R}$,
\begin{equation}
\hat{h}\big(\varphi_t(x,s)\big)= \psi_{\tau_{(x,s)}(t)}\big(\hat{h}(x,s)\big). \label{conjug}
\end{equation}
Let $t'=\tau_{(x,s)}(t)$. The equation (\ref{conjug}) is equivalent to 
\begin{eqnarray*}
&\hat{h}\left(f^{n_{s,t}(x)}(x), s+t-\sum \limits_{i=0}^{n_{s,t}(x)-1}c_f\left(f^i(x)\right)\right)=\psi_{t'}\left(h(x),s\frac{c_g\left(h(x)\right)}{c_f(x)}\right)  \\
&\Longleftrightarrow \left(\underbrace{h\left(f^{n_{s,t}(x)}(x))\right)}_{g^{n_{s,t}(x)}\left(h(x)\right)} , \left[s+t-\sum \limits_{i=0}^{n_{s,t}(x)-1}c_f\left(f^i(x)\right)\right]\frac{c_g\left(h\left(f^{n_{s,t}(x)}(x)\right)\right)}{c_f\left(f^{n_{s,t}(x)}(x)\right)}\right)=\psi_{t'}\left(h(x),s\frac{c_g\left(h(x)\right)}{c_f(x)}\right)
\end{eqnarray*}
The equality holds if and only if $t'$ is such that 
\begin{equation}
\sum_{i=0}^{n_{s,t}(x)-1}c_g\left(g^i\left(h(x)\right)\right) \leq t' + s \frac{c_g\left(h(x)\right)}{c_f(x)} <\sum_{i=0}^{n_{s,t}(x)}c_g\left(g^i\left(h(x)\right)\right) \label{escolha de n}
\end{equation}
and 
\begin{equation}
t' + s \frac{c_g\left(h(x)\right)}{c_f(x)}-\sum_{i=0}^{n_{s,t}(x)-1}c_g\left(g^i\left(h(x)\right)\right)= \left[s+t-\sum_{i=0}^{n_{s,t}(x)-1}c_f\left(f^i(x)\right)\right]\frac{c_g\left(h\left(f^{n_{s,t}(x)}(x)\right)\right)}{c_f\left(f^{n_{s,t}(x)}(x)\right)} \label{determinacao de tau}
\end{equation}

\noindent From (\ref{determinacao de tau}) it follows that 

\begin{equation*}
\tau_{(x,s)}(t)= \left[s+t-\sum_{i=0}^{n_{s,t}(x)-1}c_f\left(f^i(x)\right)\right]\frac{c_g\left(h\left(f^{n_{s,t}(x)}(x)\right)\right)}{c_f\left(f^{n_{s,t}(x)}(x)\right)} - s \frac{c_g\left(h(x)\right)}{c_f(x)}+\sum_{i=0}^{n_{s,t}(x)-1}c_g\left(g^i\left(h(x)\right)\right).
\end{equation*}

\vspace*{0.3cm}

\noindent From (\ref{escolha de n}), we get 

\begin{equation*}
0 \leq s+t - \sum_{i=0}^{n_{s,t}(x)-1}c_f\left(f^i(x)\right)< c_f\left(f^{n_{s,t}(x)}(x)\right), 
\end{equation*} 
thence
\begin{equation*}
0 \leq \frac{c_g\left(h\left(f^{n_{s,t}(x)}(x)\right)\right)}{c_f\left(f^{n_{s,t}(x)}(x)\right)} \left[ s+t - \sum_{i=0}^{n_{s,t}(x)-1}c_f\left(f^i(x)\right)\right] < c_g\left(h\left(f^{n_{s,t}(x)}(x)\right)\right)=c_g(g^{n_{s,t}(x)}\left(h(x)\right))
\end{equation*} 

\vspace*{0.3cm}

\noindent In overall,

\begin{eqnarray*}
\sum_{i=0}^{n_{s,t}(x)-1}c_g\left(g^i\left(h(x)\right)\right) &\leq& \underbrace{\frac{c_g\left(h\left(f^{n_{s,t}(x)}(x)\right)\right)}{c_f\left(f^{n_{s,t}(x)}(x)\right)} \left[ s+t - \sum_{i=0}^{n_{s,t}(x)-1}c_f\left(f^i(x)\right)\right] + \sum_{i=0}^{n_{s,t}(x)-1}c_g\left(g^i\left(h(x)\right)\right)}_{t' + s \frac{c_g\left(h(x)\right)}{c_f(x)}}\\ & <&\sum_{i=0}^{n_{s,t}(x)}c_g\left(g^i\left(h(x)\right)\right)
\end{eqnarray*}

Where the inequalities on (\ref{escolha de n}) follows. Then, $n_{s\frac{c_g\left(h(x)\right)}{c_f(x)},t'}\left(h(x)\right)=n_{s,t}(x)$
like we wanted. 

\vspace*{0.5cm}

Clearly, $\tau$ is continuous and strictly increasing because for all $(x,s) \in M_{c_f}$ we have
$$
\frac{d \tau_{(x,s)}(t)}{dt}=\frac{c_g\left(h\left(f^{n_{s,t}(x)}(x)\right)\right)}{c_f\left(f^{n_{s,t}(x)}(x)\right)} > 0.
$$

\vspace*{0.5cm}

If $\hat{h}: M_{c_f} \longrightarrow N_{c_g}$ is a homeomorphism, then $\varphi$ and $\psi$ are topologically equivalent flows. In fact:
\begin{itemize}
\item [$\cdot$]$\hat{h}$ is continuous because it is the composition of continuous functions;
\item [$\cdot$]$\hat{h}$ is invertible with inverse $$\hat{h}^{-1}(y,t):= \varphi_{t\frac{c_f\left(h^{-1}(y)\right)}{c_g(y)}} \circ h^{-1} \circ \psi_{-t} (y,t)= \left(h^{-1}(y),t\frac{c_f\left(h^{-1}(y)\right)}{c_g(y)}\right);$$
\item [$\cdot$]$\hat{h}^{-1}$ is continuous.
\end{itemize}

In conclusion we have just proved the following:
\begin{proposition}
The flows $\varphi$ and $\psi$ are topologically equivalent, being $\hat{h}$ the equivalency between them.
\end{proposition}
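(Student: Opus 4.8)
The plan is to verify directly the three requirements in the definition of topological equivalence from \S\ref{TE}: that $\hat{h}$ is a homeomorphism of $M_{c_f}$ onto $N_{c_g}$, that it carries $\varphi$-orbits onto $\psi$-orbits, and that it respects orientation. The heart of the matter is the first item, since the other two are essentially recorded already in the computations preceding the statement.

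First I would check that the formula (\ref{h}) descends to a well-defined map on the quotient $M_{c_f}=\tilde{M}/\!\sim$. Writing $\hat{h}(x,s)=(h(x),s\,c_g(h(x))/c_f(x))$, one evaluates both representatives of a glued point: on one hand $\hat{h}(x,c_f(x))=(h(x),c_g(h(x)))$, and on the other $\hat{h}(f(x),0)=(h(f(x)),0)=(g(h(x)),0)$ by the conjugacy $h\circ f=g\circ h$. In $N_{c_g}$ these two points coincide, because $(h(x),c_g(h(x)))\sim(g(h(x)),0)$ under the gluing defining $N_{c_g}$; hence $\hat{h}$ passes to equivalence classes. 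The same computation shows continuity across the identification seam $\{t=c_f(x)\}$, while on the interior of the fundamental domain $\hat{h}$ is a composition of the continuous maps $h$, $\varphi$ and $\psi$, so it is continuous everywhere. For bijectivity I would exhibit the explicit inverse already written above, $\hat{h}^{-1}(y,t)=(h^{-1}(y),t\,c_f(h^{-1}(y))/c_g(y))$, and verify $\hat{h}\circ\hat{h}^{-1}=\mathrm{id}$ and $\hat{h}^{-1}\circ\hat{h}=\mathrm{id}$ coordinatewise; continuity of $\hat{h}^{-1}$ follows by the symmetric argument, so $\hat{h}$ is a homeomorphism.

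Next, orbit-preservation together with the reparametrization is exactly the content of the identity (\ref{conjug}), $\hat{h}(\varphi_t(x,s))=\psi_{\tau_{(x,s)}(t)}(\hat{h}(x,s))$, which was obtained above along with the explicit formula for $\tau_{(x,s)}$ and the verification that $n_{s\,c_g(h(x))/c_f(x),\,t'}(h(x))=n_{s,t}(x)$. This identity says precisely that the image under $\hat{h}$ of the $\varphi$-orbit through $(x,s)$ is the $\psi$-orbit through $\hat{h}(x,s)$. Finally, since $d\tau_{(x,s)}(t)/dt=c_g(h(f^{n_{s,t}(x)}(x)))/c_f(f^{n_{s,t}(x)}(x))>0$, each reparametrization is strictly increasing, so the orbit correspondence preserves the time-orientation. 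Combining these three facts yields that $\hat{h}$ is a topological equivalence between $\varphi$ and $\psi$.

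The main obstacle, and the only step that is not pure bookkeeping, is confirming that (\ref{h}) respects the gluing relation and is continuous at the identification locus: a formula defined on the product $M\times\mathbb{R}^+$ need not survive passage to the quotient, and it is exactly here that the conjugacy $h\circ f=g\circ h$ is used in an essential way. Once that compatibility is secured, the inverse and composition identities are routine verifications on each coordinate.
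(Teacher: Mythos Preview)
Your proposal is correct and follows essentially the same route as the paper: the computations establishing (\ref{conjug}) and the positivity of $d\tau_{(x,s)}/dt$ handle orbit-preservation and orientation, and the homeomorphism property is obtained by exhibiting the explicit inverse $\hat{h}^{-1}(y,t)=(h^{-1}(y),\,t\,c_f(h^{-1}(y))/c_g(y))$ and noting continuity of both maps. The one place where you are more careful than the paper is in verifying that the formula for $\hat{h}$ respects the identification $(x,c_f(x))\sim(f(x),0)$ via the conjugacy $h\circ f=g\circ h$; the paper takes this for granted, so your addition only strengthens the argument.
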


\begin{figure}[h]\label{fig1}
\begin{center}
  \includegraphics[width=10cm,height=6cm]{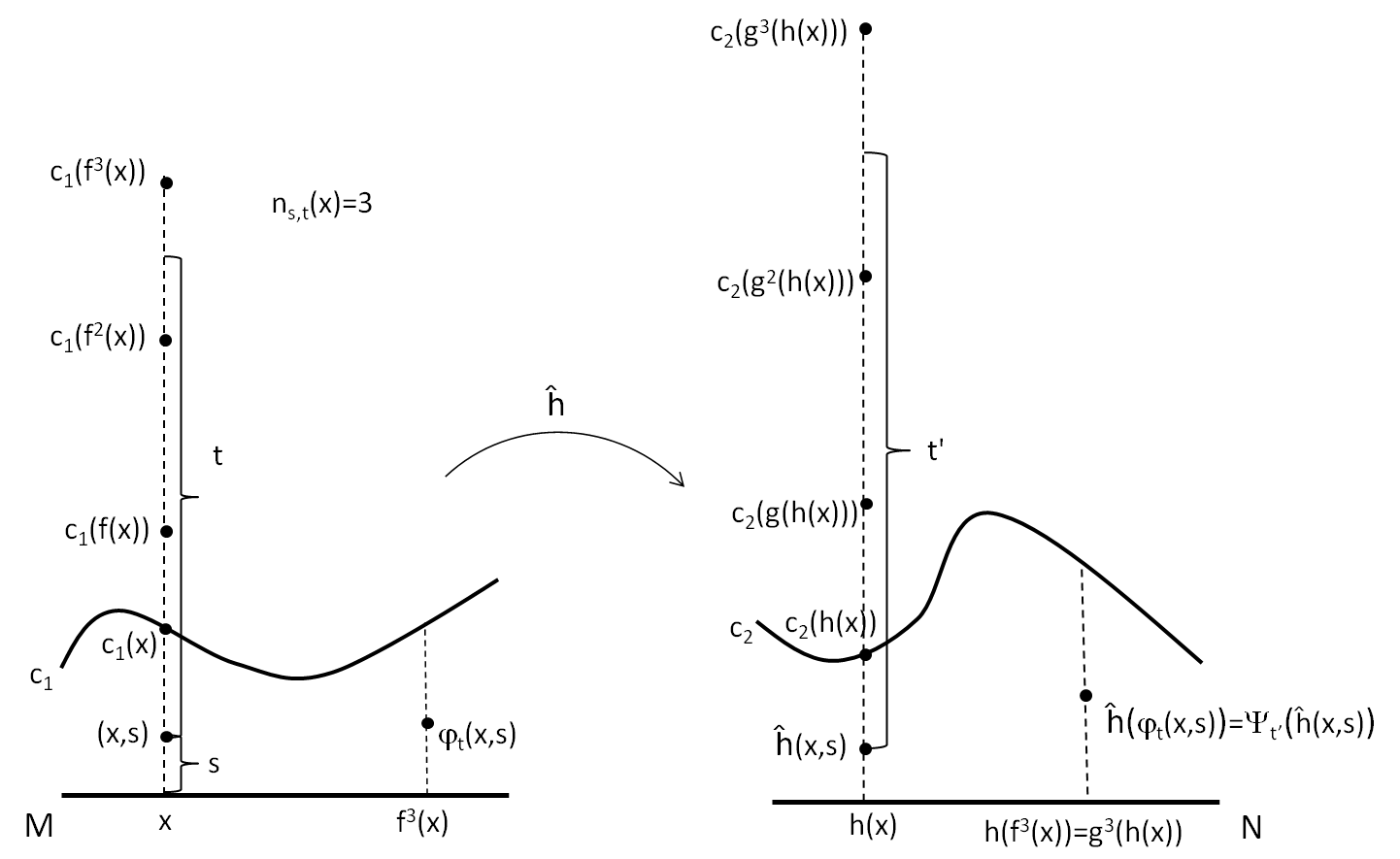}
\caption{Topologically equivalence between suspension flows.}
\label{figure}
\end{center}
\end{figure}

We begin by obtaining a preliminary result on piecewise differentiability:

\begin{proposition}
Let $f: M \longrightarrow M$ and $g: N \longrightarrow N$ be two surfaces Anosov diffeomorphisms. Assume that there exists a topologically conjugacy $h: M \longrightarrow N$ between them and, moreover, $h$ is differentiable in a single point. Let $c_f$ and $c_g$ be two ceiling functions over $M$ and $N$, respectively. If $c_f$ and $c_g$ are differentiable, then the function $\hat{h}: M_{c_f} \longrightarrow N_{c_g}$ described above is piecewise differentiable. 
\end{proposition}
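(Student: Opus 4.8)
The plan is to read off the regularity of $\hat h$ directly from its explicit closed form (\ref{h}), so that the problem reduces to the regularity of the two ingredients entering that formula — the conjugacy $h$ and the ceiling functions $c_f,c_g$ — and then to isolate the single locus, the cross-section, where the argument cannot conclude differentiability. The main obstacle in proving the proposition is not the flow bookkeeping but the two-dimensional explosion for $h$ itself: turning differentiability of $h$ at one point into differentiability of $h$ on all of $M$. I would resolve this by invoking the Ferreira--Pinto result, after which everything else is a direct computation.

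Concretely, first I would upgrade the pointwise hypothesis on $h$. Since $f$ and $g$ are Anosov diffeomorphisms of surfaces, the entire manifold is a hyperbolic set carrying transverse stable and unstable foliations, and $h$ matches these foliations. This is exactly the setting of the explosion-of-smoothness theorem of Ferreira and Pinto (\cite{FP}): a topological conjugacy that is differentiable at a single point is, by the bounded distortion of the stable/unstable ratio geometry, a differentiable diffeomorphism on all of $M$. This is the step that consumes the single-point assumption; from here on I may treat $h$ as differentiable everywhere.

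Second, recall from (\ref{h}) that in the product coordinates $(x,s)$ on the fundamental domain $\tilde M=\{(x,s):0\le s\le c_f(x)\}$ the equivalence is $\hat h(x,s)=\big(h(x),\,\lambda(x)\,s\big)$ with $\lambda(x)=c_g(h(x))/c_f(x)$. Because $c_f\ge a>0$ there is no division by zero, and $\lambda$ is differentiable as a quotient of the differentiable maps $c_g\circ h$ and $c_f$. On the open set $\{(x,s):0<s<c_f(x)\}$, which is precisely the complement in $M_{c_f}$ of the cross-section $\Sigma=M\times\{0\}$ and on which $(x,s)$ are genuine smooth coordinates, both components of $\hat h$ are then differentiable as products and compositions of differentiable maps, and the differential is immediate in block form. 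This already gives the asserted piecewise differentiability: $\hat h$ is differentiable on the dense open piece $M_{c_f}\setminus\Sigma$.

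Finally I would record the one place this argument does not reach, namely $\Sigma$, equivalently the gluing $(x,c_f(x))\sim(f(x),0)$, where the coordinate description changes and where one must match the one-sided derivative obtained as $s\uparrow c_f(x)$ with the one obtained as $s\downarrow 0$ at $(f(x),0)$, under the differential of the identification. Verifying that matching is exactly what separates mere piecewise differentiability from the global differentiability asserted in Theorem~\ref{teo0}, so I would defer it; for the present proposition it suffices that differentiability holds off $\Sigma$.
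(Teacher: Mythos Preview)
Your proposal is correct and follows essentially the same route as the paper: write $\hat h(x,s)=\big(h(x),\,s\,c_g(h(x))/c_f(x)\big)$, invoke the Ferreira--Pinto explosion theorem \cite{FP} to make $h$ differentiable everywhere, and then read off differentiability of $\hat h$ on the open piece $\{0<s<c_f(x)\}$ from the block form of its derivative, leaving the cross-section $M\times\{0\}$ as the only unresolved locus. The paper's proof is exactly this, only more terse; your additional remarks on why the gluing at $\Sigma$ is the obstruction are accurate and anticipate the reparametrization step that follows.
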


\begin{proof}
The homeomorphism $\hat{h}$ is given by $$\hat{h}(x,s)=\left(h(x), s\frac{c_g\big(h(x)\big)}{c_f(x)}\right),$$ for all $(x,s) \in M_{c_f}$. Assume that $(x,s) \notin M \times \{0\}$. Then,
$D\hat{h}_{(x,s)}=\begin{pmatrix} & Dh_x & ~ & \vline\,\textbf{0} \\ \hline* & &\diamond  & \,\vline\,\star\end{pmatrix}$ where $$*=s\frac{\partial \left(\frac{c_g\left(h(x)\right)}{c_f(x)}\right)}{\partial x_1}, \qquad \diamond=s\frac{\partial \left(\frac{c_g\left(h(x)\right)}{c_f(x)}\right)}{\partial x_2} \qquad \textrm{e} \qquad \star=\frac{c_g\left(h(x)\right)}{c_f(x)},$$
being $x=(x_1, x_2)$. But, $Dh_x$ exists by the main theorem in ~\cite{FP}, $*$ and $\diamond$ exists since $c_f$ and $c_g$ are both differentiable. 

\end{proof}

The lack of differentiability in our construction lies in the way $\hat{h}$ act in the sections $M$ and $N$. Next, we carefully reparametrize $\hat{h}$ in order to achieve the differentiability of  $\hat{h}$ in the whole suspension manifold. We begin by proving a useful and elementary result about bump functions.

\begin{lemma} \label{lema2}
Given $a<b$ and $c \in \mathbb{R}$, there is a $C^\infty$ function $F_{a,b}^c: \mathbb{R} \longrightarrow \mathbb{R}$ such that $F_{a,b}^c(t)=0$ for all $t \notin (a,b)$ and $\int_{-\infty}^\infty F_{a,b}^c(s) \, ds=c$.
\end{lemma}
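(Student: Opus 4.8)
The plan is to build $F_{a,b}^c$ in two stages: first manufacture a fixed, nonnegative, $C^\infty$ bump supported exactly on $(a,b)$, and then rescale it by a suitable real constant so that its total integral equals the prescribed number $c$.

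First I would recall the standard one-sided smooth cutoff $\psi:\mathbb{R}\to\mathbb{R}$ defined by $\psi(t)=e^{-1/t}$ for $t>0$ and $\psi(t)=0$ for $t\leq 0$. The key classical fact is that $\psi\in C^\infty(\mathbb{R})$: away from the origin smoothness is clear, while at $t=0$ each right-hand derivative has the form $P(1/t)\,e^{-1/t}$ for some polynomial $P$, and this tends to $0$ as $t\downarrow 0$, so all one-sided derivatives exist and agree with the (vanishing) left-hand derivatives. Thus $\psi$ is $C^\infty$, $\psi\geq 0$, and $\psi(t)>0$ precisely when $t>0$.

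Next I would set $\beta(t)=\psi(t-a)\,\psi(b-t)$. Being a product of two $C^\infty$ functions, $\beta$ is $C^\infty$; it is nonnegative; it vanishes outside $(a,b)$, since the first factor vanishes for $t\leq a$ and the second for $t\geq b$; and it is strictly positive on $(a,b)$, where both factors are positive. Hence $\beta$ is continuous with compact support contained in $[a,b]$, so $I:=\int_{-\infty}^{\infty}\beta(s)\,ds=\int_a^b \beta(s)\,ds$ is a well-defined, strictly positive real number.

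Finally I would define $F_{a,b}^c:=\tfrac{c}{I}\,\beta$. As a constant multiple of a $C^\infty$ function it is again $C^\infty$; it inherits from $\beta$ the vanishing outside $(a,b)$; and by construction $\int_{-\infty}^{\infty}F_{a,b}^c(s)\,ds=\tfrac{c}{I}\cdot I=c$ for every real $c$, including $c=0$ (where $F_{a,b}^c\equiv 0$) and $c<0$ (where the rescaling constant is simply negative). There is essentially no serious obstacle here: the one point deserving care is the smoothness of $\psi$ at the origin, which is the standard flatness computation recalled above, while the positivity of $I$ guarantees that the rescaling is legitimate. Everything else is immediate, consistent with the lemma being the elementary building block announced in the text.
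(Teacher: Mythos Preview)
Your proof is correct and follows essentially the same route as the paper: build a standard nonnegative $C^\infty$ bump supported exactly on $(a,b)$ and rescale it by $c$ divided by its integral. The only cosmetic difference is that the paper writes the bump as a single expression $e^{-1/((t-a)(b-t))}$ while you factor it as $\psi(t-a)\psi(b-t)$, but the idea and the verification are identical.
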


\begin{proof}
Consider the map 
$$ f_{a,b}(t)= \left\{ \begin{array}{ll}
e^{\frac{-1}{(a-t)(b-t)}}, \quad t \in (a,b) \\
0, \qquad \qquad \;\; \textrm{otherwise} 
\end{array} \right.$$

and now define $$F_{a,b}^c(t)=\frac{c \cdot f_{a,b}(t)}{\int_a^b f_{a,b}(s)\, ds}.$$
Then, $F_{a,b}^c$ is $C^\infty$ and, since $\int_{-\infty}^\infty F_{a,b}^c(s) \, ds=\int_a^b F_{a,b}^c(s) \, ds$, we are done.
\end{proof}


Recall that $c_f: M \longrightarrow \mathbb{R}^+_0$ and $c_g: N \longrightarrow \mathbb{R}_0^+$ are differentiable functions such that $c_f(x)$, $c_g(y) \geq \alpha >0$, for all $x \in M$ and $y \in N$. Let $\varepsilon = \alpha/3$. Fix $x \in M$ and let $\varphi_x: \mathbb{R}^+_0 \longrightarrow \mathbb{R}$ be a $C^\infty$ map such that 
\begin{itemize}
\item[i.] $\varphi_x(t)=0$ for all $t \in [0,\varepsilon];$
\item[ii.] $\varphi_x(t)=0$ for all $ t \geq c_f(x)-\varepsilon$ and
\item[iii.] $\int_\varepsilon^{c_f(x)-\varepsilon} \varphi_x(s)\,ds =c_g\big(h(x)\big)-c_f(x)$.
\end{itemize}
The existence of such function is guaranteed by Lemma \ref{lema2}.

Now, consider $\phi_x(t)=\int_0^t \;\varphi_x(s)+1 \, ds$. We have:

\begin{itemize}
\item[i.] $\phi_x(t)= \int_0^t \; 1 \,ds = t$, for all $ t \in [0,\varepsilon];$
\item[ii.] for all $t \geq c_f(x)$ we have \begin{align*}
\phi_x(t)&= \int_0^\varepsilon (\varphi_x(s)+1) \,ds +  \int_\varepsilon^{c_f(x)-\varepsilon} (\varphi_x(s)+1) \,ds+\int_{c_f(x)-\varepsilon}^t (\varphi_x(s)+1) \,ds \\
&= \varepsilon + c_g\big(h(x)\big)-c_f(x)+c_f(x)-\varepsilon - \varepsilon + t - c_f(x)+\varepsilon \\
&=c_g\big(h(x)\big)-c_f(x) + t;
\end{align*}
\item[iii.] $\phi_x \in C^\infty$.
\end{itemize}

\begin{figure}[h]\label{fig1}
\begin{center}
  \includegraphics[width=4cm,height=4cm]{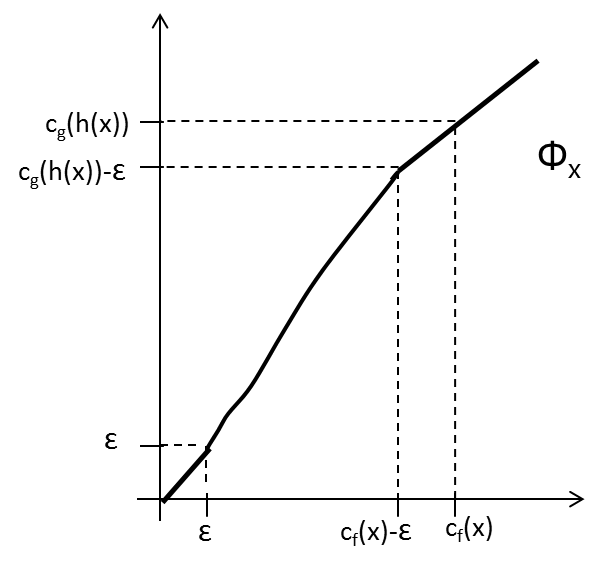}
\caption{The reparametrization $\phi_x$ on each fiber at $x$.}
\label{figure}
\end{center}
\end{figure}

Let $\hat{M}=\left\{(x,s) \in M \times \mathbb{R}: s \in [0, c_f(x)]\right\}$ and define the following equivalence relation: $\big(x,c_f(x)\big)\sim \big(f(x),0\big)$. Let $M_{c_f}=\hat{M}/ \sim$. Similarly, we define $N_{c_g}$. Now, consider $\hat{h}: M_{c_f} \longrightarrow N_{c_g}$ defined by $\hat{h}(x,s)= \big( h(x), \phi_x(s) \big)$. If $h$ is differentiable at a point, then $\hat{h}$ is differentiable.

\section*{Acknowledgements}

MB was partially supported by National Funds through FCT - ``Funda\c{c}\~{a}o para a Ci\^{e}ncia e a Tecnologia", project PEst-OE/MAT/UI0212/2011. SD and AAP thank the financial support of LIAADÐINESC TEC through Strategic Project - LA 14 - 2013-2014 with reference PEst-C/EEI/LA001 4/2013, 
Project USP-UP, IJUP, Centro de Matem\'atica da Universidade do Porto, Faculty of Sciences, University of Porto;
Calouste Gulbenkian Foundation; ERDF/FEDER-European Regional Development Fund and COMPETE Programme (operational programme for competitiveness) and by National Funds through Funda\c{c}\~ao para a Ci\^encia e a Tecnologia (FCT)
within the projects ``Dynamics and Applications" (PTDC/MAT/121107/2010), PEst-OE/MAT/UI0212/2011, COMP-01-0124-FEDER-022701
and FCOMP-01-0124-FEDER-037281.


\bigskip

\end{document}